\newtheorem{theorem}{Theorem}[section]
\newtheorem{lemma}[theorem]{Lemma}
\newtheorem{definition}[theorem]{Definition}
\newcommand{\Sup}[1]{\underset{#1}{sup\,}}
\newcommand{\R}{{\bf R}}
\newcommand{\lam}{\lambda}
\newcommand{\Ome}{\Omega}
\newcommand{\vecy}[1]{\boldsymbol{#1}}
\newcommand{\lap}{{\Delta}}
\newcommand{\bra}[1]{\left( #1 \right)}
\newcommand{\Quad}[1]{{Q \bra{#1}}}
\newcommand{\norm}[1]{\Vert #1 \Vert}
\newcommand{\beq}{\begin{equation}}
\newcommand{\mlap}[1]{\norm{\bra{-\lap}^{#1}f}}
\newcommand{\kik}[1]{C^{\infty}_{c}\bra{#1}}
\newcommand{\ip}[2]{\langle #1,#2\rangle}
\newcommand{\sbra}[1]{\small( #1 \small)}
\newcommand{\sspace}{\,\,}
\newcommand{\sobz}{W^{m,2}_0}
\newcommand{\dx}{d_x}
\newcommand{\g}{{\tau}}
\newcommand{\ggamma}{{m\bra{1-\epsilon}-\tfrac{N}{2}}}
\newcommand{\UUU}[2]{\tfrac{c}{#1}\sspace \dx^{{}^{2m\bra{1-\frac{N}{2m}-{#1}}}}\sspace  \g\bra{\tfrac{#2}{2}}^{1-{#1}}{e}^{-\mu {#2} }}
\newcommand{\UU}[2]{u_{#1}\bra{#2,x}}
\newcommand{\DD}[2]{\delta_{#1}\bra{#2,x}}
\newcommand{\modulus}[1]{\arrowvert#1\arrowvert}
\begin{document}
\title{\bf Pointwise Lower bounds on the Heat Kernels of Uniformally Elliptic Operators in Bounded Regions}
\author{Narinder S Claire}
\date{}
\maketitle
\begin{abstract}
We obtain pointwise lower bounds for heat kernels of higher order differential operators with Dirichlet boundary conditions on 
bounded domains in $\R^N$. The bounds  exhibit explicitly the nature of the spatial decay of the heat kernel close to the boundary. We make no smoothness 
assumptions on our operator coefficients which we assume only to be bounded and measurable.\\ \\
{\bf AMS Subject Classification : 35K25\\
Keywords : Heat Kernel, Parabolic, Uniformly Elliptic, Dirichlet Boundary Conditions.}  
\end{abstract} 

\section{Introduction}
Pointwise lower bounds on the heat kernels for higher order elliptic operators were first obtained
by Davies \cite{Da4}. 
In this paper we address the question of pointwise lower bounds on heat kernels generated by uniformly elliptic differential operators 
with Dirichlet boundary conditions on bounded regions of $\R^N$.

It is helpful to give an indicative though a non-rigorous formulation of the family of higher order operators that we focus on 
in this paper. The operator is defined more completely through it's quadratic form.
Given a bounded domain $\Ome$ in $\R^N$ we express the operator of order $2m>N$ as : 
\begin{equation}
 Hf\bra{\vecy{x}}\quad :=\quad \sum\limits_{\substack{|\vecy{\alpha}|\leq m \\ |\vecy{\beta}|\leq m}}
 \bra{-1}^{|\vecy{\alpha}|}
D^{\vecy{\alpha}}\bra{a_{\vecy{\alpha},\vecy{\beta}}\bra{\vecy{x}}D^{\vecy{\beta}}f\bra{\vecy{x}}}
\end{equation}
where $a_{\vecy{\alpha},\vecy{\beta}}$ are complex bounded measurable functions.\\ 
%We assume that $a_{\vecy{\alpha},\vecy{\beta}}=0$ whenever $|\vecy{\alpha}| \neq |\vecy{\beta}|$.\\
The associated quadratic form $Q$ 
\begin{equation}
Q\bra{f}\quad :=\quad \sum\limits_{\substack{|\vecy{\alpha}|\leq m \\ |\vecy{\beta}|\leq m}}
\int_{\substack{\Ome}}a_{\vecy{\alpha},\vecy{\beta}}\bra{\vecy{x}}D^{\vecy{\beta}}f\bra{\vecy{x}}\overline{D^{\vecy{\alpha}}f\bra{\vecy{x}}}
\end{equation}

defined with domain equal to the Sobolev space 
$\sobz \bra{\Ome}$ will be assumed to satisfy the ellipticity condition with a strictly positive constant $c$ 
\begin{equation}\label{H1}
c^{-1}\mlap{\frac{m}{2}}^2_2\leq\Quad{f}\leq c\mlap{\frac{m}{2}}^2_2
\end{equation}
We define the the spectral gap
$$ \mu=\inf\limits_{f\in \kik{\Ome}}\frac{\Quad{f}}{\norm{f}_2^2}$$
Since we have made the assumption that $\tfrac{N}{2m}<1$, it will be informative to track the dependency on this constraint by defining the 
quantity $0<\epsilon\leq1-\tfrac{N}{2m}$ and 
$$\gamma:=\ggamma$$ \par
For a given point $x$ in $\Ome$ we define it's distance from the boundary $\partial \Ome$ 
$$\dx:= d\bra{\vecy{x},\partial\Ome} = \inf\limits_{\vecy{y}\in \partial\Ome}\modulus{\vecy{x} - \vecy{y}}$$
We define the function $\g\bra{t}$ such that 
\begin{equation}\label{e:gt}
\g\bra{t} := \begin{cases} \mu  e^{-2\mu t} &  t>\frac{1}{\mu }\\
\frac{1}{t}e^{-\mu  t-1} &  t \leq \frac{1}{\mu }\\
\end{cases}
\end{equation}
Our main result is the following theorem :
\begin{theorem}
If the heat kernels generated by the differential operator $H$ satisfies the inequalities
\begin{eqnarray*}
k\sbra{t,x,x}\leq c\sspace{\small(1-\tfrac{N+2\gamma}{2m}\small)}^{-1}\sspace t^{-\frac{N+2\gamma}{2m}}\sspace \dx^{2\gamma}\sspace& {\text{when }} t<\frac{2}{\mu}\\
\\
k\sbra{t,x,x}\leq c\sspace{\small(1-\tfrac{N+2\gamma}{2m}\small)}^{-1}\sspace e^{-\mu t}\sspace\dx^{2\gamma}\sspace &  {\text{when }} t\geq\frac{2}{\mu}
\end{eqnarray*}

\end{theorem}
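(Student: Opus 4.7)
The plan is to reduce the diagonal bound to two ingredients: (i) a pointwise Sobolev--Dirichlet inequality for $\sobz(\Ome)$ carrying an explicit $\dx^{2\gam}$ weight, and (ii) the standard analyticity and spectral--gap estimates for the semigroup $\sg$. The bridge is the semigroup identity $k(2t,x,x)=\int k(t,x,z)\,k(t,z,x)\,dz$ which, via Cauchy--Schwarz and the characterisation of $\norm{k(t,x,\cdot)}_2$ as the operator norm of the evaluation functional $g\mapsto (\sg g)(x)$ on $L^2(\Ome)$, reduces the theorem to finding a pointwise estimate of the form $|(\sg g)(x)|^2\leq M(t,x)\norm{g}_2^2$; one then reads off $k(2t,x,x)\leq M(t,x)$ (with minor adjoint modifications in the non-selfadjoint case).

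The key analytic input is the weighted inequality
\[
|f(x)|^2 \;\leq\; C\,\epsilon^{-1}\,\dx^{2\gam}\,\Quad{f}^{1-\epsilon}\,\norm{f}_2^{2\epsilon}, \qquad f\in\sobz(\Ome),
\]
valid for $0<\epsilon\leq 1-\tfrac{N}{2m}$ with $\gam=m(1-\epsilon)-\tfrac{N}{2}$. Under \eqref{H1} the form $\Quad{f}$ is comparable to $\norm{(-\lap)^{m/2}f}_2^2$, so this is an assertion about Bessel potentials acting on $\sobz(\Ome)$. At the endpoint $\epsilon\to 0$ it reduces to the classical embedding $\sobz(\Ome)\hookrightarrow C^{0,m-N/2}(\overline\Ome)$ combined with vanishing of $f$ on $\partial\Ome$, giving $|f(x)|\leq C\,\dx^{m-N/2}\norm{(-\lap)^{m/2}f}_2$; the interpolated version for general $\epsilon$ is obtained by a dyadic (Littlewood--Paley) decomposition localised at scale $\sim\dx$, interpolating with the $L^2$ norm, and the $\epsilon^{-1}$ prefactor arises from summing geometric contributions across dyadic scales near the critical Sobolev threshold $2m=N$.

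Applying the inequality to $f=\sg g$ and using the spectral calculus, one has $\Quad{\sg g}=\norm{H^{1/2}\sg g}_2^2\leq C\,\g(t)\norm{g}_2^2$, since $\sup_{\lam\geq\mu}\lam e^{-2\lam t}$ reproduces, up to an absolute constant, the piecewise-defined function $\g$ of \eqref{e:gt}; and the spectral gap yields $\norm{\sg g}_2^2\leq e^{-2\mu t}\norm{g}_2^2$. Substituting into the Sobolev inequality,
\[
|(\sg g)(x)|^2 \;\leq\; C\,\epsilon^{-1}\,\dx^{2\gam}\,\g(t)^{1-\epsilon}\,e^{-2\mu\epsilon t}\,\norm{g}_2^2,
\]
hence $k(2t,x,x)\leq C\,\epsilon^{-1}\,\dx^{2\gam}\,\g(t)^{1-\epsilon}\,e^{-2\mu\epsilon t}$. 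In the short-time regime $2t<2/\mu$ one has $\g(t)\asymp t^{-1}$, and since $1-\epsilon=(N+2\gam)/(2m)$ this reproduces the $t^{-(N+2\gam)/(2m)}$ bound; in the long-time regime $2t\geq 2/\mu$ one has $\g(t)=\mu e^{-2\mu t}$ and the product of exponentials collapses to $e^{-2\mu t}$, which becomes $e^{-\mu t}$ after renaming $2t\to t$. The prefactor $(1-\tfrac{N+2\gam}{2m})^{-1}=\epsilon^{-1}$ is precisely the constant appearing in the statement.

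The main obstacle is the first step: establishing the weighted Sobolev--Dirichlet inequality with the quantitative $\epsilon^{-1}$ dependence. The non-quantitative form is classical, but tracking the blow-up as $2m\to N$ requires a careful dyadic or Calder\'on-type reproducing argument. A secondary technical issue is the non-selfadjoint, complex-coefficient setting, where the spectral calculus used for $\Quad{\sg g}$ and $\norm{\sg g}_2$ must be replaced by quadratic-form arguments exploiting the sectoriality provided by \eqref{H1}; this still yields the bounds $\norm{H\sg}_{2\to 2}\leq c/t$ and the correct exponential decay, with constants depending only on $c$ and $\mu$.
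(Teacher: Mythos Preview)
The paper does not prove this statement. Read the theorem again: it begins with ``If'' and has no ``then'' clause. It is the \emph{hypothesis} under which the paper operates, not a result the paper establishes. The entire paper is devoted to \emph{lower} bounds (see the title and abstract); the upper bounds in Theorem~1.1 are the input to Davies' comparison machinery (Lemmas~1.2--1.3). This is confirmed immediately after the theorem, where the same bound is repackaged as the function $u_\epsilon(t,x)$ with the phrase ``with $c>0$ such that $k(t,x,x)\leq u_\epsilon(t,x)$'': the upper bound is assumed, and $\delta_\epsilon(t,x)=k(t,x,x)/u_\epsilon(t,x)$ is then bounded below in Sections~3--4. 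So there is no proof in the paper against which to compare your attempt.

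That said, your sketch is a correct outline of how one does prove such weighted diagonal upper bounds, and it is essentially the route taken in the literature you would be citing (Davies, Barbatis). The identification $(1-\tfrac{N+2\gamma}{2m})^{-1}=\epsilon^{-1}$ is right because $1-\epsilon=(N+2\gamma)/(2m)$, and your semigroup step reproduces exactly the function $\tau(t)$ of \eqref{e:gt}. The one genuinely nontrivial ingredient, as you correctly flag, is the weighted Sobolev--Dirichlet inequality
\[
|f(x)|^2 \;\leq\; C\,\epsilon^{-1}\,\dx^{2\gamma}\,\Quad{f}^{1-\epsilon}\,\norm{f}_2^{2\epsilon},\qquad f\in\sobz(\Ome),
\]
with the quantitative $\epsilon^{-1}$ constant; this is not proved here either, so if you intend your write-up to be self-contained you would need to supply that argument (or a precise reference) rather than just describe it.
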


\par
We define the function $\UU{\epsilon}{t}$ such that
$$
\UU{\epsilon}{t} := \UUU{\epsilon}{t}
$$
with $c>0$ such that $$k\sbra{t,x,x}\leq \UU{\epsilon}{t}$$
and define the function $$\DD{\epsilon}{t} := \frac{k\sbra{t,x,x}}{\UU{\epsilon}{t}}$$

We summarise some of the key lemmas of Davies'\cite{Da4} theory.
\begin{lemma}[Davies \cite{Da4}] \label{dav}
Given  $0<\alpha,s<1$ and $p$ defined by 
\begin{equation}
p+\bra{1-p}\alpha s=s
\end{equation}
$$k\bra{ts,x,x}<\UU{\epsilon}{\alpha ts}^{1-p}
\UU{\epsilon}{t}^p\bra{\frac{k\bra{t,x,x}}{\UU{\epsilon}{t}}}^p$$
\end{lemma}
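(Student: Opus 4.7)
The inequality has the form of a log-convexity interpolation for the diagonal heat kernel $t\mapsto k\sbra{t,x,x}$, in which one endpoint is replaced by its pointwise majorant $\UU{\epsilon}{\cdot}$ and the other is kept in Davies-quotient form. The plan has three stages: establish log-convexity of $t\mapsto k\sbra{t,x,x}$; check that the relation $p+(1-p)\alpha s = s$ is precisely the statement that $ts$ is the convex combination $(1-p)(\alpha ts) + p\,t$; then substitute the upper bound at the appropriate endpoint.

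The engine is log-convexity. From the semigroup identity
\begin{equation*}
k\sbra{a+b,x,x} = \int k\bra{a,x,y}\,k\bra{b,y,x}\,\rmd y,
\end{equation*}
Cauchy--Schwarz yields
\begin{equation*}
k\sbra{a+b,x,x} \leq \Bigl(\int |k(a,x,y)|^2\,\rmd y\Bigr)^{1/2}\Bigl(\int |k(b,y,x)|^2\,\rmd y\Bigr)^{1/2},
\end{equation*}
and each factor is a diagonal kernel value by the semigroup property, giving $k\sbra{2a,x,x}^{1/2}\,k\sbra{2b,x,x}^{1/2}$ in the self-adjoint case. In the complex-coefficient setting considered here the second factor is a diagonal of the semigroup generated by the formal adjoint of $H$, and one exploits the fact that $H$ and $H^*$ share the same Gaussian-type bounds on the diagonal at $(x,x)$. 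This gives midpoint log-convexity, hence (by continuity of $t\mapsto k\sbra{t,x,x}$) log-convexity throughout $(0,\infty)$.

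Solving $p+(1-p)\alpha s = s$ produces $p = s(1-\alpha)/(1-\alpha s)\in(0,1)$ and, as a one-line check, $(1-p)(\alpha ts) + p\,t = ts$. Log-convexity therefore yields
\begin{equation*}
k\sbra{ts,x,x} \leq k\sbra{\alpha ts,x,x}^{\,1-p}\,k\sbra{t,x,x}^{p}.
\end{equation*}
Applying $k\sbra{\alpha ts,x,x}\leq \UU{\epsilon}{\alpha ts}$ to the first factor and splitting $k\sbra{t,x,x}^{p} = \UU{\epsilon}{t}^{\,p}\,\DD{\epsilon}{t}^{\,p}$ for the second gives precisely the asserted bound; the strict inequality reflects strict log-convexity of $k\sbra{\cdot,x,x}$ for nontrivial kernels, or equivalently the fact that $\UU{\epsilon}{\cdot}$ is a non-sharp majorant.

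The main obstacle is the first step in the non-self-adjoint regime: one must identify the two $L^2$--norms above as diagonal values of the semigroups generated by $H$ and by $H^*$ separately and then compare them at $(x,x)$. Once this identification is in hand, the remainder of the proof is algebra and a single substitution.
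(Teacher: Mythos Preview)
The paper does not prove this lemma itself; it is quoted from Davies \cite{Da4} and stated without argument. Your route --- log-convexity of $t\mapsto k\sbra{t,x,x}$ via the semigroup law and Cauchy--Schwarz, the identification of $ts$ as the convex combination $(1-p)(\alpha ts)+p\,t$ dictated by $p+(1-p)\alpha s=s$, and then replacement of $k\sbra{\alpha ts,x,x}$ by its majorant $\UU{\epsilon}{\alpha ts}$ --- is exactly the argument Davies gives in the cited reference, so your proposal coincides with the intended proof.

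One small simplification: the two-sided form bound \eqref{H1} forces $Q(f)$ to be real, so $H$ is self-adjoint in this setting and your flagged obstacle about the adjoint semigroup does not actually arise; the identification $\int |k(a,x,y)|^2\,\rmd y = k\sbra{2a,x,x}$ holds directly.
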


\begin{definition}
If $\omega_x$ is the distribution such that 
$\ip{f}{\omega_x}=f\bra{x}$ then  we define $G_t\bra{x,x}$
$$G_t\bra{x,x}:= \langle \bra{tH+1}^{-1}\omega_x,\omega_x \rangle = \langle \int\limits_0^{\infty}e^{-\bra{tH+1}s} ds 
\ \omega_x,\omega_x\rangle$$
\end{definition}
\begin{lemma}[Davies \cite{Da4}] \label{dav2}
 \begin{equation}
\frac{G_t\bra{x,x}}{\UU{\epsilon}{t}}<\int\limits_0^1 \frac{\UU{\epsilon}{\alpha ts}}{\UU{\epsilon}{t}}\,\delta_\epsilon \bra {t,x}^p\,ds+e^{-1}\delta_\epsilon\bra {t,x}
\end{equation}
\end{lemma}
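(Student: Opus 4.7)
The plan is to represent $G_t\bra{x,x}$ as a Laplace-type integral of the diagonal heat kernel, split the integration at $s=1$, apply Lemma \ref{dav} on the short range, and use monotonicity of the diagonal heat kernel on the long range. Expanding the resolvent in the definition,
$$G_t\bra{x,x} \;=\; \la \int_0^\infty e^{-\bra{tH+1}s}\,ds\,\omega_x,\omega_x\ra \;=\; \int_0^\infty e^{-s}\,k\bra{ts,x,x}\,ds,$$
after pulling the scalar factor out and recognising $\la e^{-tHs}\omega_x,\omega_x\ra = k\bra{ts,x,x}$.

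On $[0,1]$, Lemma \ref{dav} with $p=p(s)=s(1-\alpha)/(1-\alpha s)$ yields
$$\frac{k\bra{ts,x,x}}{\UU{\epsilon}{t}} \;<\; \bra{\frac{\UU{\epsilon}{\alpha ts}}{\UU{\epsilon}{t}}}^{1-p}\DD{\epsilon}{t}^p.$$
The function $\g$ is decreasing on each of its two branches, and since $\alpha s \leq 1$ on this range, inspection of the explicit formula $\UU{\epsilon}{t}=\tfrac{c}{\epsilon}\dx^{2m\bra{1-N/\bra{2m}-\epsilon}}\g\bra{t/2}^{1-\epsilon}e^{-\mu t}$ shows that $\UU{\epsilon}{\alpha ts}/\UU{\epsilon}{t}\geq 1$. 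Because $1-p \in [0,1]$, raising this ratio to the power $1-p$ only decreases it, and combining with $e^{-s}\leq 1$ gives
$$\int_0^1 e^{-s}\frac{k\bra{ts,x,x}}{\UU{\epsilon}{t}}\,ds \;<\; \int_0^1 \frac{\UU{\epsilon}{\alpha ts}}{\UU{\epsilon}{t}}\,\DD{\epsilon}{t}^p\,ds.$$

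On $[1,\infty)$, I would exploit that $\sig\mapsto k\bra{\sig,x,x}$ is non-increasing --- which for a symmetric semigroup follows from $k\bra{\sig,x,x}=\norm{e^{-H\sig/2}\omega_x}^2$ --- so that $k\bra{ts,x,x}\leq k\bra{t,x,x}=\UU{\epsilon}{t}\DD{\epsilon}{t}$ for $s\geq 1$. This yields
$$\int_1^\infty e^{-s}\frac{k\bra{ts,x,x}}{\UU{\epsilon}{t}}\,ds \;\leq\; \DD{\epsilon}{t}\int_1^\infty e^{-s}\,ds \;=\; e^{-1}\DD{\epsilon}{t},$$
and adding the two estimates delivers the stated strict inequality (strict from the short-range piece). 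The main delicacy is justifying the diagonal monotonicity when $H$ has complex coefficients; I expect to handle this as in Davies \cite{Da4}, by passing to the semigroup of the real symmetric part of $Q$ and comparing diagonal values, which is precisely the step most likely to require additional care in the full write-up.
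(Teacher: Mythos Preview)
The paper does not supply its own proof of this lemma; it is simply quoted from Davies \cite{Da4}. Your argument is the natural one and is essentially what Davies does: write $G_t(x,x)=\int_0^\infty e^{-s}k(ts,x,x)\,ds$, bound the piece $s\in(0,1)$ via Lemma \ref{dav} together with the monotonicity $\UU{\epsilon}{\alpha ts}\geq\UU{\epsilon}{t}$ (so the exponent $1-p\in[0,1]$ may be replaced by $1$), and bound the piece $s\geq 1$ using $k(ts,x,x)\leq k(t,x,x)$ and $\int_1^\infty e^{-s}\,ds=e^{-1}$.

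One remark on your closing caveat: in this paper the two-sided ellipticity bound \eqref{H1} forces $Q(f)$ to be real, and the variational formula for $G_t(x,x)$ proved immediately after Lemma \ref{dav2} already relies on $H$ being self-adjoint. So within the paper's hypotheses the diagonal monotonicity $\sigma\mapsto k(\sigma,x,x)=\norm{e^{-H\sigma/2}\omega_x}^2$ is immediate, and there is no need to pass to the real symmetric part of $Q$.
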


\begin{lemma}
$$
G_t\bra{x,x}  = \Sup{g \in Dom{Q}}\{\frac{|g\bra{x}|^2}{t\Quad{g}+\norm{g}^2_2} \, :g \neq 0\}\\
$$
\end{lemma}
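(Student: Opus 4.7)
The plan is to identify $G_t(x,x)$ with the squared norm of the Riesz representative of point evaluation in an appropriate Hilbert space, and then recover the supremum formula as a routine consequence of Cauchy--Schwarz.

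First I would equip $\Dom(Q) = \sobz(\Ome)$ with the inner product
$$A(f,g) := tQ(f,g) + \langle f,g\rangle,$$
which, thanks to the ellipticity hypothesis (\ref{H1}) and the fact that $t>0$, is equivalent to the usual Sobolev norm and makes $\Dom(Q)$ a Hilbert space. Next, because $2m>N$, the Sobolev embedding $\sobz(\Ome)\hookrightarrow C(\bar\Ome)$ shows that point evaluation
$$\omega_x : g\mapsto g(x)$$
is a bounded linear functional on $(\Dom(Q), A)$. By the Riesz representation theorem there exists a unique $h_x\in\Dom(Q)$ with
$$g(x) = A(g,h_x) = tQ(g,h_x) + \langle g,h_x\rangle \qquad \forall\, g\in\Dom(Q).$$

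The second step is to identify $h_x$ with $(tH+1)^{-1}\omega_x$ in the sense used in the definition of $G_t(x,x)$. Since $H$ is the self-adjoint operator associated with $Q$, the identity $A(g,h_x)=\langle g,\omega_x\rangle$ says exactly that $(tH+1)h_x = \omega_x$ in the form sense, so the distributional pairing gives
$$G_t(x,x) = \langle (tH+1)^{-1}\omega_x,\omega_x\rangle = \langle h_x,\omega_x\rangle = h_x(x) = A(h_x,h_x).$$

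Finally, applying the Cauchy--Schwarz inequality for the inner product $A$,
$$|g(x)|^2 = |A(g,h_x)|^2 \leq A(g,g)\,A(h_x,h_x) = \bigl(tQ(g)+\norm{g}_2^2\bigr)\,G_t(x,x),$$
so that
$$\frac{|g(x)|^2}{tQ(g)+\norm{g}_2^2} \leq G_t(x,x)$$
for every non-zero $g\in\Dom(Q)$, with equality achieved by taking $g=h_x$ (so that Cauchy--Schwarz is saturated). The supremum is therefore attained and equals $G_t(x,x)$.

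The only delicate point is giving rigorous meaning to $(tH+1)^{-1}\omega_x$, since $\omega_x\notin L^2(\Ome)$; but this is precisely what the Riesz construction achieves, because $\omega_x$ \emph{is} in the dual of the form domain once $2m>N$. Everything else is routine Hilbert space manipulation, and the Sobolev embedding $2m>N$ is essential both for well-posedness of $\omega_x$ as a functional and for the existence of $h_x$.
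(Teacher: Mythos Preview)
Your argument is correct but follows a different route from the paper. The paper proceeds via the operator square root: it writes
\[
G_t(x,x)=\langle (tH+1)^{-1/2}\omega_x,(tH+1)^{-1/2}\omega_x\rangle
=\sup_{\norm{f}_2=1}|\langle (tH+1)^{-1/2}\omega_x,f\rangle|^2,
\]
moves $(tH+1)^{-1/2}$ across the pairing, and then substitutes $g=(tH+1)^{-1/2}f$, using that this map is a bijection from $L^2$ onto $\Dom(Q)$. Your approach instead works intrinsically in the form domain: you put the inner product $A(\cdot,\cdot)=tQ(\cdot,\cdot)+\langle\cdot,\cdot\rangle$ on $\Dom(Q)$, invoke Riesz to produce the representative $h_x$, and recover the supremum via Cauchy--Schwarz with equality at $g=h_x$. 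Your version has the merit of making explicit where the hypothesis $2m>N$ enters (to place $\omega_x$ in the dual of the form domain) and of exhibiting the maximizer directly; the paper's version is a shade shorter and more operator-theoretic but leaves the Sobolev embedding implicit in the formal manipulation of $(tH+1)^{-1/2}\omega_x$. Substantively the two are equivalent reformulations of the same duality.
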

\begin{proof}
\begin{eqnarray*}
G_t\bra{x,x} & = & \ip{\bra{tH+1}^{-1}\omega_x}{\omega_x}\\
        & = & \ip {\bra{tH+1}^{-\frac{1}{2}}\omega_x}{\bra{tH+1}^{-\frac{1}{2}}\omega_x}\\
        & = & \Sup{f \in L^2\bra{\Ome}}\{|\ip {\bra{tH+1}^{-\frac{1}{2}}\omega_x}{f}|^2\, : \norm{f}_2=1\}\\
        & = & \Sup{f \in L^2\bra{\Ome}}\{|\bra{tH+1}^{-\frac{1}{2}}f\bra{x}|^2 : \norm{f}_2=1\}\\
        & = & \Sup{g \in Dom{Q}}\{\frac{|g\bra{x}|^2}{\norm{\bra{tH+1}^{\frac{1}{2}}g}^2} \, :g \neq 0\}\\
        & = & \Sup{g \in Dom{Q}}\{\frac{|g\bra{x}|^2}{t\Quad{g}+\norm{g}^2_2} \, :g \neq 0\}\\
\end{eqnarray*}
\end{proof}

\section{Estimation with Test Functions}
Let  $\psi$ be a function in $\kik{\Ome}$ defined as :
$$ \psi \bra{u} := \begin{cases}
1 & u=0 \\
0  & |u| \geq 1\\
\end{cases}
$$
and then define the test function $g_{\bra{x,r}}$ as 
$$g_{\bra{x,r}}\bra{y}=\psi\bra{\frac{y-x}{r}}$$
It is then clear to see that $g_{\bra{x,r}} \in Dom\bra{Q}$ iff $ x \in \Ome$ and $r \leq \dx$.\\
By direct integration we can see that we have the two estimates
$$
\norm{g_{\bra{x,r}}}_2^2  \quad \leq \quad \int\limits_{B\bra{x,r}} d^N\vecy{y} \quad \leq  \quad cr^N
$$
$$
\Quad{g_{\bra{x,r}}}  \quad  \leq  \quad   \int\limits_{B\bra{x,r}}  |\lap^{\frac{m}{2}}g_{\bra{x,r}}\bra{y}|^2 d^N\vecy{y}  \quad \leq  \quad  c r^{N-2m}
$$ 
for some constant $c>0$. As a consequence we have the estimate 
\begin{equation}\label{gestimate}
G_{t}\bra{x,x} \geq\frac{c}{tr^{N-2m}+r^N}
\end{equation}
for some constant $c>0$.

\begin{lemma}\label{gineq} There is a constant $c>0$ such that \\
when $t \leq \dx^{2m}$ then 
\begin{equation}
G_{t}\bra{x,x} \geq c\,t^{-\frac{N}{2m}}
\end{equation}
when $t \geq\dx^{2m}$ then 
 \begin{equation}\label{e:nike}
G_{t}\bra{x,x} \geq c\, t^{-1}\dx^{2m-N}
\end{equation}
\end{lemma}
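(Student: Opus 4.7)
The plan is to treat inequality (\ref{gestimate}) as a one-parameter family of lower bounds indexed by the radius $r \in (0,\dx]$ of the test-function support, and then optimise over $r$ separately in the two regimes of $t$.

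First I would study the function
\[
f(r) = tr^{N-2m} + r^{N}, \qquad r>0,
\]
which appears in the denominator of (\ref{gestimate}). Since the hypothesis $2m>N$ makes the exponent $N-2m$ strictly negative, $f'(r)=0$ has a unique positive solution of the form $r_\star = c_0\, t^{1/(2m)}$ with $c_0>0$ depending only on $m,N$, and $f$ is decreasing on $(0,r_\star]$ and increasing on $[r_\star,\infty)$. The idea is then simply that the smaller $f(r)$ is, the larger (\ref{gestimate}) is, so one should push $r$ as close to $r_\star$ as the constraint $r\le\dx$ allows.

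In the regime $t\le \dx^{2m}$ the critical radius $r_\star$ satisfies $r_\star \le c_0\dx$, so (after rescaling $c_0$ into the final constant) I choose $r = t^{1/(2m)}\le \dx$, which is admissible. A direct substitution gives $tr^{N-2m}=r^{N}=t^{N/(2m)}$, so $f(r)\le 2t^{N/(2m)}$, and (\ref{gestimate}) produces the first assertion $G_t(x,x)\ge c\,t^{-N/(2m)}$.

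In the opposite regime $t\ge \dx^{2m}$ the critical radius lies outside the admissible interval $(0,\dx]$, so $f$ is monotonically decreasing on that interval and I take $r=\dx$. Then
\[
f(\dx) = t\,\dx^{N-2m}+\dx^{N},
\]
and the condition $t\ge \dx^{2m}$ is exactly the statement $\dx^{N}\le t\,\dx^{N-2m}$, so $f(\dx)\le 2t\,\dx^{N-2m}$; substituting into (\ref{gestimate}) yields the second assertion $G_t(x,x)\ge c\,t^{-1}\dx^{2m-N}$. There is no real obstacle here beyond bookkeeping of constants; the only point requiring mild care is to verify that the critical radius $r_\star$ genuinely lies inside $(0,\dx]$ in the first case (and hence that the chosen $r$ is admissible), which is exactly the dichotomy defining the two cases of the lemma.
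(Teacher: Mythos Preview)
Your proof is correct and follows exactly the paper's approach: substitute $r=t^{1/(2m)}$ into \eqref{gestimate} when $t\le \dx^{2m}$ and $r=\dx$ when $t\ge \dx^{2m}$. The optimisation discussion you add is sound motivation but not needed for the argument, since the two specific choices already give the claimed bounds.
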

\begin{proof}
When  $t \leq \dx^{2m}$
we substitute  $r=t^{\frac{1}{2m}}$ into inequality \eqref{gestimate}.\\
When $t \geq \dx^{2m}$
we substitute  $r=\dx$ into inequality \eqref{gestimate}.
\end{proof}

\section{Short Time asymptotics}
In this section we assume that  $t < \frac{2}{\mu}$.
\begin{lemma}
 When $t\leq \dx^{2m}$ then  $k\bra{t,x,x}> c\,t^{-\frac{N}{2m}}$ for some constant $c>0$
\end{lemma}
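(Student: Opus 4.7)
My plan is to combine the lower bound $G_{t}(x,x) \geq c\,t^{-N/(2m)}$ supplied by Lemma \ref{gineq} with the upper bound of Lemma \ref{dav2}, specialising to the boundary parameter $\epsilon := 1-\tfrac{N}{2m}$. With this choice the exponent $2m(1-\tfrac{N}{2m}-\epsilon)$ on $\dx$ in the definition of $\UU{\epsilon}{t}$ vanishes, so $\UU{\epsilon}{t}$ loses its dependence on the distance to the boundary in the short-time regime.

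A direct calculation using $\g(t/2)=\tfrac{2}{t}e^{-\mu t/2-1}$, valid for $t<2/\mu$, shows that
\begin{equation*}
 c_{2}\,t^{-N/(2m)} \;\leq\; \UU{\epsilon}{t} \;\leq\; C_{1}\,t^{-N/(2m)},
\end{equation*}
uniformly in $x$, so Lemma \ref{gineq} yields $G_{t}(x,x)/\UU{\epsilon}{t} \geq c_{0}$ for a universal $c_{0}>0$ whenever $t\leq\dx^{2m}$ and $t<2/\mu$. Inserting this into Lemma \ref{dav2} with a fixed $\alpha\in(0,1)$ gives
\begin{equation*}
 c_{0} \;<\; \int_{0}^{1} \frac{\UU{\epsilon}{\alpha ts}}{\UU{\epsilon}{t}}\,\DD{\epsilon}{t}^{p}\,ds \;+\; e^{-1}\DD{\epsilon}{t},
\end{equation*}
where the ratio $\UU{\epsilon}{\alpha ts}/\UU{\epsilon}{t}$ is bounded above by a constant times $(\alpha s)^{-N/(2m)}$ and hence integrable on $(0,1)$ because $N/(2m)<1$.

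The remainder is a contradiction argument. Assuming $\DD{\epsilon}{t}<c_{1}$ for a threshold $c_{1}$ to be determined, I split the integral at a point $s_{0}\in(0,1)$. Since $\DD{\epsilon}{t}\leq 1$ and the exponent $p(s)=s(1-\alpha)/(1-\alpha s)$ is strictly increasing from $0$ to $1$, I bound $\DD{\epsilon}{t}^{p(s)}\leq 1$ on $(0,s_{0})$ and $\DD{\epsilon}{t}^{p(s)}\leq\DD{\epsilon}{t}^{p(s_{0})}$ on $(s_{0},1)$, producing contributions of order $s_{0}^{1-N/(2m)}$ and $c_{1}^{p(s_{0})}$ respectively. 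Choosing $s_{0}$ small and then $c_{1}$ small makes both contributions, and also $e^{-1}c_{1}$, less than $c_{0}/4$, contradicting the displayed strict inequality. Hence $\DD{\epsilon}{t}\geq c_{1}$, and multiplying by the lower bound on $\UU{\epsilon}{t}$ gives $k(t,x,x)\geq c\,t^{-N/(2m)}$. The main delicacy is keeping $c_{0}$, $c_{1}$, $s_{0}$ genuinely uniform in $t$ and $x$; this is precisely what the special choice $\epsilon=1-N/(2m)$ enables, since any smaller $\epsilon$ would leave a residual $\dx^{2m-N-2m\epsilon}$ factor in $\UU{\epsilon}{t}$ that degrades the lower bound on $G_{t}/\UU{\epsilon}{t}$ whenever $\dx$ grows strictly beyond $t^{1/(2m)}$.
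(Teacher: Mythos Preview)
Your proof is correct and follows the same overall architecture as the paper: you specialize the free parameter so that the $\dx$–dependence in $\UU{\epsilon}{t}$ disappears (your $\epsilon=1-\tfrac{N}{2m}$ is exactly the paper's $\theta=\tfrac{N}{2m}$, since $\theta=1-\epsilon$), then combine the lower bound $G_t(x,x)\geq c\,t^{-N/(2m)}$ from Lemma~\ref{gineq} with the upper bound of Lemma~\ref{dav2} to force $\DD{\epsilon}{t}$ away from zero.

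The genuine difference is in how you extract that lower bound on $\DD{\epsilon}{t}$ from the integral inequality. The paper extends the $s$–integral to $(0,\infty)$, replaces the exponent $p(s)$ by a linear upper bound, and evaluates the resulting integral as a Gamma function; this produces an estimate of the form $c_0\leq\Gamma(1-\tfrac{\theta}{2})\bigl[\ln(1/\kappa)\bigr]^{\theta-1}+e^{-1}\kappa$ with $\kappa$ comparable to $\DD{\epsilon}{t}$, and since $\theta-1<0$ the first term vanishes as $\kappa\to 0$, giving the contradiction via an auxiliary lemma (the paper's Lemma~\ref{lem:gamma}). Your route---splitting at $s_0$, using $\DD{}^{p(s)}\leq 1$ on $(0,s_0)$ and $\DD{}^{p(s)}\leq\DD{}^{p(s_0)}$ on $(s_0,1)$, then choosing $s_0$ and $c_1$ in turn---is more elementary: it avoids the Gamma function machinery and the extra lemma entirely, at the cost of not producing an explicit functional form for the lower bound. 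Both arguments rely on the same integrability $\int_0^1 s^{-N/(2m)}\,ds<\infty$ and on $\DD{\epsilon}{t}\leq 1$, so neither is strictly stronger; yours is shorter and self-contained, while the paper's yields a closed-form expression that is reused in the subsequent cases $t\geq\dx^{2m}$ and $t>1/\mu$.
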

\begin{proof}
By lemma \ref{gineq} we have 
$$\frac{G_{t}\bra{x,x}}{\UU{\epsilon}{t}}\geq
c\,\epsilon \bra{\frac{t}{\dx^{2m}}}^{1-\frac{N}{2m}-\epsilon} e^{\frac{\mu t}{2}\bra{1-\epsilon}}$$

We begin our estimate with
$$ \alpha\int\limits_0^\alpha v\bra{\theta,Ts,x} \delta_\theta^{\frac{s}{\alpha}} ds  <
\alpha\int\limits_0^\infty v\bra{\theta,Ts,x} \delta_\theta^s ds$$
recalling that $\alpha<1$ and $\delta<1$.

Now substituting $v$ we have
$$ \alpha\int\limits_0^\infty v\bra{\theta,Ts,x} \delta_\theta^s \ ds =a\alpha
c_\theta^2\frac{\dx^{2m\theta-N}}{T^{\theta}}
\int\limits_0^\infty s^{-\theta}e^{-s Ts\bra{1-\frac{\theta}{2}}}e^{s\ln \delta_\theta}\ ds$$
and evaluating this we have
$$ \alpha\int\limits_0^\infty v\bra{\theta,Ts,x} \delta_\theta^s \ ds = \alpha
c_\theta^2\frac{\dx^{2m\theta-N}}{T^{\theta}}
\big[\ln\bra{\frac{e^{s T\bra{1-\frac{\theta}{2}}}}{\delta_\theta}}\big]^{\theta-1}
\Gamma\bra{1-\frac{\theta}{2}}$$ where $a$ is some constant,
hence we conclude

$$\frac{\alpha}{v\bra{\theta,T,x}}\int\limits_0^\alpha u\bra{\theta,Ts,x}
 \delta_\theta^{\frac{s}{\alpha}} ds  <e^{s T\bra{1-\frac{\theta}{2}}}
\big[\ln\bra{\frac{e^{s T\bra{1-\frac{\theta}{2}}}}{\delta_\theta}}\big]^{\theta-1}
\Gamma\bra{1-\frac{\theta}{2}}$$
and with these estimates we conclude from the Green-Heat inequality
$$a\frac{e^\theta}{c^2_\theta}\bra{\frac{\dx^{2m}}{T}}^{\frac{N}{2m}-\theta}e^{s T\bra{1-\frac{\theta}{2}}}
<e^{s T\bra{1-\frac{\theta}{2}}}\big[\ln\bra{\frac{e^{s T\bra{1-\frac{\theta}{2}}}}{\delta_\theta}}\big]^{\theta-1}
\Gamma\bra{1-\frac{\theta}{2}}+e^{-1}\delta_\theta$$
then dividing by $e^{s T\bra{1-\frac{\theta}{2}}}$ we have
$$\frac{e^\theta}{c^2_\theta}\bra{\frac{\dx^{2m}}{T}}^{\frac{N}{2m}-\theta}
<\big[\ln\bra{\frac{1}{\kappa}}\big]^{\theta-1}
\Gamma\bra{1-\frac{\theta}{2}}+e^{-1}\kappa$$
where $\kappa=\frac{\delta}{e^{s T\bra{2-\theta}}}$
We now set $\theta=\frac{N}{2m}$ and by applying lemma \ref{lem:gamma} we see that for some constant c
$$ c<\big[\ln\bra{\frac{1}{\kappa}}\big]^{\frac{N}{2m}-1}$$and
$$c<\kappa$$
where $c$ is some positive constant, this is
$$c<e^{-s T\bra{1-\frac{N}{4m}}}\delta_{\frac{N}{2m}}$$
and hence
$$k\bra{T,x,x}> ce^{s T \bra{1-\frac{N}{4m}}}v\bra{\tfrac{N}{2m},T,x}$$
Making the substitution $u\bra{\tfrac{N}{2m},T,x}$
$$k\bra{T,x,x}> \frac{c}{T^{\frac{N}{2m}}}$$
 
\end{proof}

\subsection{When $T\geq \dx^{2m}$}
The only difference to our analysis from the case $T \leq \dx^{2m}$ is the lower bound on the
Greens function, here it is $$G_T\bra{x}>c\frac{\dx^{2m}}{T}$$
and this leads to the different inequality
\begin{eqnarray*}
\frac{G_{T}\bra{x}}{u\bra{\theta,T,x}}&>&
\frac{e^\theta}{c^2_\theta}\frac{\dx^{2m-N}}{T}\frac{T^\theta}{\dx^{2m\theta-N}}
e^{s T\bra{1-\frac{\theta}{2}}}\\
&=& \frac{e^\theta}{c^2_\theta}
\bra{\frac{\dx^{2m}}{T}}^{1-\theta}e^{s T\bra{1-\frac{\theta}{2}}}
\end{eqnarray*}
With this the Green-Heat Inequality becomes
$$\frac{e^\theta}{c^2_\theta}
\bra{\frac{\dx^{2m}}{T}}^{1-\theta}e^{s T\bra{1-\frac{\theta}{2}}}<
e^{s T\bra{1-\frac{\theta}{2}}}\big[\ln\bra{\frac{e^{s T\bra{1-\frac{\theta}{2}}}}{\delta_\theta}}\big]^{\theta-1}
\Gamma\bra{1-\frac{\theta}{2}}+e^{-1}\delta_\theta$$
and as before we let $\kappa=\frac{\delta_\theta}{e^{s T\bra{1-\frac{\theta}{2}}}}$ and divide by
$e^{s T\bra{1-\frac{\theta}{2}}}$, to get
$$\frac{a}{c^2_\theta}
\bra{\frac{\dx^{2m}}{T}}^{1-\theta}<\big[\ln\bra{\frac{1}{\kappa}}\big]^{\theta-1}
\Gamma\bra{1-\frac{\theta}{2}}+e^{-1}\kappa$$
We continue by applying lemma \ref{lem:gamma} to conclude
$$\frac{a}{c_\theta^2}\bra{\frac{\dx^{2m}}{T}}^{1-\theta}<
\big[\ln\bra{\frac{1}{\kappa}}\big]^{\theta-1}$$
for some constant $a$
We next obtain from this
$$a c_\theta^{\frac{2}{1-\theta}}\bra{\frac{\dx^{2m}}{T}}^{\frac{1-\theta}{\theta-1}}
=ac_\theta^{\frac{2}{1-\theta}}\bra{\frac{\dx^{2m}}{T}}^{-1}>\ln\bra{\frac{1}{\kappa}}$$
and inverting the logarithm and substituting for $\kappa$
$$ac'_\theta e^{s T\bra{2-\theta}}e^{-\frac{T}{\dx^{2m}}}\leq \delta_\theta$$
and so
$$k\bra{T,x,x}>a c_\theta c'_\theta \frac{\dx^{2m\theta-N}}{T}e^{-\frac{T}{\dx^{2m}}}$$

\section{Long Time Asymptotics}
We now consider the situation where $T>\frac{1}{s}$
As before we begin with the inequalities
$$G_T\bra{x}>c\frac{\dx^{2m-N}}{T}$$
and the upper bound on the heat kernel for $T>\frac{1}{s}$ is
$$u\bra{\theta,T,x}=c^2_{\theta}s^{-\theta}\dx^{2m-N}e^{-s T}$$
therefore
\begin{eqnarray*}
\frac{G_T\bra{x}}{u\bra{\theta,T,x}}&>&c^{-2}_{\theta}s^{-\theta}\bra{\frac{\dx^{2m-N}}
{T}}\dx^{2m}\frac{e^{s T}}{\dx^{2m-N}}\\
&=&c^{-2}_{\theta}s^{-\theta}\frac{e^{s T}}{T}
\end{eqnarray*}
From section $6.1$ we deduce from the Green-Heat inequality 
\begin{equation}
\frac{G_T\bra{x,x}}{u\bra{\theta,T,x}}<\int\limits_0^{\infty}
 \frac{u\bra{\theta,\alpha Ts,x}}{u\bra{T,x}}\bra{\delta_\theta \bra {T,x}}^p+e^{-1}\delta_\theta
\end{equation}
It is important to note at this point that $u\bra{\theta,\alpha Ts,x}$ is formulated
 differently
for $s>\frac{1}{\alpha s T}$ and $s<\frac{1}{\alpha s T}$ and so we use the estimate
$$\frac{G_T\bra{x,x}}{u\bra{\theta,T,x}}<\int\limits_0^{\infty}
 \frac{v\bra{\theta,\alpha Ts,x}}{u\bra{T,x}}\bra{\delta_\theta \bra {T,x}}^p+e^{-1}\delta_\theta$$

Giving us the Green-Heat inequality
$$c^{-2}_{\theta}s^{-\theta}\frac{e^{s T}}{T}
\leq \frac{e^{s T}}{\dx^{2m}}\bra{\frac{\dx^{2m}}{T}}^\theta
\big[\ln\bra{\frac{e^{s T\bra{1-\frac{\theta}{2}}}}{\delta_\theta}}\big]^{\theta-1}
\Gamma\bra{1-\frac{\theta}{2}}+e^{-1}\delta_\theta$$
we then multiply by $e^{s T\bra{\frac{\theta}{2}-1}}\dx^{2m}$

\begin{eqnarray*}
c^{-2}_{\theta}s^{-\theta}e^{\frac{s T\theta}{2}}\frac{\dx^{2m}}{T}
\leq e^{\frac{s T\theta}{2}}\bra{\frac{\dx^{2m}}{T}}^\theta
\big[\ln\bra{\frac{e^{s T\bra{1-\frac{\theta}{2}}}}{\delta_\theta}}\big]^{\theta-1}
\Gamma\bra{1-\frac{\theta}{2}}\\+e^{-1}\dx^{2m}e^{s T\bra{\frac{\theta}{2}-1}}\delta_\theta
\end{eqnarray*}
By observing that $\dx$ is bounded above and that 
$$\frac{\dx^{2m}}{T}<1$$ we get by setting 
$\kappa=\frac{\delta_\theta}{e^{s T\bra{1-\frac{\theta}{2}}}}$ and applying lemma \ref{lem:gamma}

$$ac^{-2}_{\theta}s^{-\theta}\bra{\frac{\dx^{2m}}{T}}\leq c
\big[\ln\frac{1}{\kappa}\big]^{\theta-1}$$
and then 
$$ac^{\frac{2}{1-\theta}}_{\theta}\bra{\frac{\dx^{2m}}{T}}^{\frac{1}{\theta-1}}\geq c
\big[\ln\frac{1}{\kappa}\big]$$

and solving we get our lower bound
$$k\bra{T,x,x}>ac_\theta'u\bra{\theta,T,x}
\exp\bra{-\bra{\frac{T}{\dx^{2m}}}^{\frac{1}{1-\theta}}}$$
hence
$$k\bra{T,x,x}>ac_\theta^2 c_\theta'\dx^{2m-N}e^{-s T}\exp\bra{-\bra{\frac{T}{\dx^{2m}}}^{\frac{1}{1-\theta}}}$$

\par
The situation for higher order operators is very different than for the case of second order
differential operators. The heat kernel may be written as
$$k\bra{t,x,y}=\sum\limits_{n=0}^\infty e^{-\lam_n t}\psi_n\bra{x}\psi_n\bra{y}$$
where $\psi_n$ are the eigenfunctions and $\lam_n$ are the eigenvalues given in
increasing order. In the case of second order operators the ground state $\psi_0$
is positive.
We therefore have the asymptotic behaviour
$$ k\bra{t,x,x}\sim e^{-\lam_0 t}\psi\bra{x}^2$$
for large time. However since the ground state for higher order operators
is not necessarily positive this type of behaviour is not necessarily true.

\section*{Acknowledgements}
This research was funded by an EPSRC Ph.D grant 95-98 and my mother. I would like to thank Brian Davies
for giving me this problem and his encouragement since. I am grateful to all the research students
 in the Mathematics Dept. Kings College, London 1995-2000 for all their help and support. I thank Owen Nicholas and Mark Owen
 for their invaluable discussions. I am indebted 
 to Anita for all her support.

%%%%%%%%%%%%%%%%%%%%%%%%%%%%%%%%%%%%%%%%
\vskip 0.3in
%Department of Mathematics \newline
%Strand \newline
%London WC2R 2LS \newline
%King's College \newline
%England \\
%e-mail: nclaire@mth.kcl.ac.uk
%\vfill

\begin{thebibliography}{99}
\bibitem{Da4} E.B. Davies, {\em Pointwise Lower Bounds On The Heat
Kernels of Higher Order Elliptic Operators}\newline Math Proc Camb Phil Soc 125 {1999} p105-111 
\bibitem{Rob}  D.W.Robinson A.F.M.ter Elst  {\em Local Lower Bounds on 
Heat Kernels}
\newline 
 Positivity 2(1998), 123--151. 

\end{thebibliography}
\end{document}